\theoremstyle{plain}                    
\newtheorem{theorem}{Theorem}[section]
\newtheorem{lemma}[theorem]{Lemma}
\newtheorem{corollary}[theorem]{Corollary}
\theoremstyle{definition}
\theoremstyle{remark}
\newtheorem{remark}[theorem]{Remark}
\newcommand{\p}{\mathbb{P}}
\newcommand{\e}{\mathbb{E}}
\numberwithin{equation}{section}
\title{On the evolution in the configuration model
\thanks{Keywords: random regular graphs, configuration model.
MSC classification: 05C80, 60J10}}
\author{Ton\'{c}i Antunovi\'{c} \\ University of California, Los Angeles \\ {\tt tantunovic@math.ucla.edu}}
\date{}
\begin{document}
\maketitle

\begin{abstract}
We give precise estimates on the number of active/inactive half-edges in the configuration model used to generate random regular graphs. This is obtained by analyzing a more general urn model with negative eigenvalues.
\end{abstract}

\section{Introduction and statements or results}

Random $d$-regular graph, a random graph obtained by sampling uniformly a $d$-regular vertex labeled simple graph on $n$ vertices (for even $nd$), is an important random graph model. While for $d=2$ the resulting graph is a union of disjoint cycles (and in particular disconnected with high probability), for $d \geq 3$ the random regular graph, with probability converging to 1 (as $n \to \infty$) is connected, and even has Hamiltonian cycles. Furthermore, for $d \geq 3$ it looks fundamentally different from a $d$-dimensional torus, in particular it is an expander, locally tree-like and has bounded number of cycles of fixed length (for more properties, proofs and references to the original work see \cite{Bollobas01, JLR00}). One of the main tools used for both generating and studying random regular graphs is the configuration model introduced by Bollob\'as in \cite{Bollobas80}. This is an algorithm which can be described as follows. Start with $n$ disconnected labeled vertices each equipped with $d$ half-edges. Select an  arbitrary vertex and mark it and it's half-edges as active, and label all other vertices and half-edges as inactive. At each stage of the algorithm choose in some way a labeled half-edge $e$ and connect it with a uniformly selected half-edge $f$ (active or not), and mark them both as used. If $f$ corresponds to an inactive vertex, also label this vertex and all its half-edges (except $f$) as active. When all half-edges are connected, collapse the connected half-edges to edges. The algorithm generates a random graph which might not be simple if two half-edges incident to the same vertex get connected, or there are two connections of half-edges incident to the same pair of vertices. However, conditioned on the event that the random graph is simple, it has the distribution of the random regular graph. Moreover, the probability that the graph produced by the configuration model is simple, is bounded away from zero as $n \to \infty$ and $d$ is fixed (see \cite{Bollobas80}, and an earlier related work \cite{BenderCanfield}).

Denote by $A_n$ and $I_n$ the number of active and inactive half-edges after the $n$-th step respectively (the number of used half-edges is clearly $2n$). Then it is easy to see that when $I_n \geq 1$, $A_n \geq 1$ 
\[\p(I_{n+1}=I_n-d, A_{n+1}=A_n+d-2) = \frac{I_n}{A_n+I_n-1}\]
and
\[
\p(I_{n+1}=I_n,A_{n+1}=A_n-2) = \frac{A_n-1}{A_n+I_n-1}. 
\] 
Therefore the process $(I_n,A_n-1)_n$ is an urn model with the replacement matrix
\[
A = \left(\begin{array}{rr} -d & d-2 \\ 0 &
  -2 \end{array} \right)~.
\]
In the present paper we will give precise estimates on the behavior of the process $(I_n,A_n)$. The results will be indispensable for the analysis of a competing multi-type version of a first passage percolation process on random regular graphs performed in \cite{main}.
To analyze the process $(I_n,A_n)$ we will consider a more general urn model with the replacement matrix
\begin{equation}\label{eq:urn_matrix}
A = \left(\begin{array}{rr} -b & b-a \\ 0 &
  -a \end{array} \right)~,
\end{equation}
for real valued $0<a < b$. The corresponding urn process $(X_n,Y_n)_n$ (which has the same distribution as $(I_n,A_n-1)_n$ for $a=2$ and $b=d$) is a Markov chain with the transition probabilities
\begin{align}\label{eq:transition_probabilites}
\p(X_{n+1}=X_n-b, Y_{n+1} = Y_n+b-a) & = \frac{X_n}{X_n+Y_n} \nonumber \\ \p(X_{n+1}=X_n, Y_{n+1} = Y_n-a) &= \frac{Y_n}{X_n+Y_n}.
\end{align}
The process stops at a random time $\rho$ defined as the first time $n$ such that $X_n<0$ or $Y_n < 0$ or $X_n+Y_n = M-an \leq 0$, where $M=X_0+Y_0$ ($X_n=X_\rho$, $Y_n=Y_\rho$ for $n \geq \rho$). Observe that $\rho$ is a stopping time with respect to the natural filtration $\mathcal{F}_n$ induced by the outcomes of the first $n$ draws.

When $a$ and $b$ are integers it can be described as follows. Start with $X_0$ blue and $Y_0$ red balls in an urn. Draw a ball from the urn uniformly at random. If the drawn ball is blue return it into the urn and then add $b-a$ red balls into the urn and remove $b$ blue balls.
If the drawn ball is red return it into the urn and remove $a$ red balls from the urn. Stop when the urn contains no more balls, or when the number of balls of some type in the urn becomes ``negative''. Then $X_n$ and $Y_n$ stand for the number of blue and red balls in the urn after $n$ draws. In the discussion that follows we will use ``the ball terminology'' even when dealing with non-integer values of $a$ and $b$.

Since the number of blue balls $X_n$ can decrease only by $b$, we will always assume that $X_0$ is a multiple of $b$ (this in particular holds in the configuration model where $X_0  = (n-1)d$ is divisible by $d$). Having this assumption, the value of $X_n$ can not become negative, and then $\rho$ is the smallest integer $n$ such that either  $Y_n < 0$ or $X_n + Y_n = M-an \leq 0$. 
Observe that at each step the sum $X_n+Y_n$ decreases by $a$ (we remove exactly $a$ balls), that is $X_n + Y_n = M-an$, so a natural assumption could be that $M= X_0 +Y_0$ is a multiple of $a$. However, as Corollary \ref{cor:estimates_on_stopping} shows with probability converging to 1 (as  $M \to \infty$  and $a$ and $b$ stay fixed) blue balls indeed get exhausted before the red ones, after which the process is deterministic and consists of removing the leftover red balls. Therefore, with probability converging to 1 we have $M-a\rho \leq 0$ and then the assumption that $M$ is a multiple of $a$ only affects the final number of balls $M-a\rho$ by being either equal to zero or negative. 
For these reasons we will not assume that $M$ is a multiple of $a$.
The configuration model does not satisfy the assumption that $M=nd-1$ is a multiple of $a=2$ anyway. 
Furthermore, the discussion above shows that the process is most interesting when the initial number of blue balls $X_0$ is large.

Using the notation and terminology described above we state our results.
Our first result concerns with the number of blue balls in the urn. It provides estimates on the time when the blue balls are exhausted, as well as the number of balls except shortly before the exhaustion time. In particular it shows that blue blue balls get exhausted when the number of leftover red balls is  $O(1)MX_0^{-a/b}$. Furthermore, as long as the number of blue balls $X_n$ is large, it behaves as $X_n = (1\pm o(1))X_0(1-an/M)^{b/a}$. 

In the rest of the paper we use the notation $C=C(a,b)$ for a strictly positive finite constant $C$ which only depends on the values of parameters $a$ and $b$.

\begin{theorem}
\label{thm:active_edges}
Consider the process
\[
K_{n}= \frac{X_n}{X_0(1-an/M)^{b/a}}
\]
and the stopping time $\tau_{X_0,M} \leq \infty$ as the smallest integer $n$
such that $X_n=0$.  For fixed $\varepsilon < 1/2$ and $t>0$ define the integer
\[
n_t = \lfloor M(1-tX_0^{-a/b})/a\rfloor.
\]
and the event
\[
\mathbf{K}_{X_0,M,t,\varepsilon}= \{\rho > n_t \text{ and } |K_{n} - 1| \leq \varepsilon \text{
  for all } 0 \leq n \leq n_{t}\}.
\]
Then there exists a constant $C=C(a,b)$ such that 
\begin{equation}\label{eq:concentration in urn models_1}
\mathbb{P}(\mathbf{K}_{X_0,M,t,\varepsilon}) \geq 1-
\frac{C}{t^{b/a}\varepsilon^2}, \ \text{for all } \ t \geq \frac{CX_0^{a/b}}{M\varepsilon},
\end{equation}
and
\begin{equation}\label{eq:concentration in urn models_2}
\mathbb{P}(\tau_{X_0,M}\geq  n_{t}) \leq Ct^{b/(2b-a)}, \ \text{for all } \ t \geq \frac{bX_0^{a/b}}{M}. 
\end{equation}
\end{theorem}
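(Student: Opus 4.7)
The strategy is to identify a multiplicative martingale associated to $X_n$, control its predictable quadratic variation, and apply Doob's $L^2$ maximal inequality together with a stopping-time bootstrap.

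With $S_k := M-ak$ and $p_n := \prod_{k=0}^{n-1}(1-b/S_k)$, the transition rule \eqref{eq:transition_probabilites} gives $\E[X_{n+1}\mid\mathcal{F}_n] = X_n(1-b/S_n)$ on $\{n<\rho\}$, so $M_n := X_{n\wedge\rho}/p_{n\wedge\rho}$ is a martingale with $M_0=X_0$ whose conditional variance is $\var(M_{n+1}\mid\mathcal{F}_n) = b^2 X_n Y_n/(p_n^2(S_n-b)^2)\mathbf{1}_{n<\rho}$. Writing $\widetilde p_n := (1-an/M)^{b/a}$, a Taylor expansion of $\log(p_n/\widetilde p_n) = \sum_{k<n}\bigl[\log(1-b/S_k)-\tfrac{b}{a}\log(1-a/S_k)\bigr]$ shows the leading error is of order $\sum_{k<n} S_k^{-2}\lesssim 1/S_n$, which for $n\le n_t$ is at most $C X_0^{a/b}/(Mt)$. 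The hypothesis $t\ge C X_0^{a/b}/(M\varepsilon)$ is exactly what makes this deterministic multiplicative error at most $\varepsilon/2$, allowing the multiplicative statement $|K_n-1|\le\varepsilon$ and the additive statement $|M_n-X_0|\le \varepsilon X_0/2$ to be traded back and forth up to constants.

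For \eqref{eq:concentration in urn models_1}, introduce the stopping time $\sigma := \inf\{n : |K_n-1|>\varepsilon\}\wedge n_t$. On $\{k<\sigma\wedge\rho\}$ one has $X_k\le (1+\varepsilon)X_0\widetilde p_k\lesssim X_0 p_k$, so together with $Y_k\le S_k$,
$$\frac{b^2 X_k Y_k}{p_k^2(S_k-b)^2}\lesssim \frac{X_0}{p_k S_k}.$$
A direct integral estimate gives $\sum_{k<n_t}(p_k S_k)^{-1}\asymp X_0/(b t^{b/a})$, hence $\E[\langle M\rangle_{\sigma\wedge\rho}]\lesssim X_0^2/t^{b/a}$. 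Doob's $L^2$ maximal inequality applied to the stopped martingale $M_{\cdot\wedge\sigma\wedge\rho}$ then gives $\p(\max|M_n-X_0|\ge \varepsilon X_0/2)\le C/(\varepsilon^2 t^{b/a})$; the comparison from the previous paragraph makes this dominate $\p(\sigma<n_t,\,\sigma<\rho)$, and a parallel (easier) estimate controls $\p(\rho\le n_t)$, yielding \eqref{eq:concentration in urn models_1}.

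For \eqref{eq:concentration in urn models_2}, apply \eqref{eq:concentration in urn models_1} at an auxiliary parameter $s\ge t$ with $\varepsilon=1/2$: with probability $\ge 1-C/s^{b/a}$, $\rho>n_s$ and $X_{n_s}\le 2 s^{b/a}$. Given $\mathcal{F}_{n_s}$ on this event, optional stopping of $M$ between $n_s$ and $n_t$ gives $\E[X_{n_t}\mathbf{1}_{\rho>n_t}\mid\mathcal{F}_{n_s}]\le X_{n_s}\,p_{n_t}/p_{n_s}\asymp X_{n_s}(t/s)^{b/a}\lesssim t^{b/a}$, so Markov's inequality yields $\p(X_{n_t}\ge b\mid\mathcal{F}_{n_s})\lesssim t^{b/a}$. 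Combining, $\p(\tau_{X_0,M}\ge n_t)\le C/s^{b/a}+Ct^{b/a}$; appropriate choice of the auxiliary $s$ then delivers the claimed bound $Ct^{b/(2b-a)}$ (in fact the direct Markov estimate using $\E[X_{n_t}]\asymp t^{b/a}$, valid once one checks that $\p(\rho\le n_t)$ is negligible in the regime $t\ge bX_0^{a/b}/M$, already gives the stronger bound $Ct^{b/a}$). The main technical obstacle is the self-referential nature of the variance bound in \eqref{eq:concentration in urn models_1}: the estimate on $\langle M\rangle_\sigma$ uses $X_k\lesssim X_0 p_k$, which is itself the event $\mathbf{K}$ we wish to establish. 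Introducing $\sigma$ at the first violation of $|K_n-1|\le\varepsilon$ and applying Doob to the stopped martingale closes the loop, but only because the Taylor comparison $p_n/\widetilde p_n=1+O(\varepsilon)$ translates the multiplicative statement for $K_n$ into the additive one on $M_n$ required by the $L^2$ inequality.
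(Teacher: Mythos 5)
Your martingale $M_n = X_{n\wedge\rho}/p_{n\wedge\rho}$, the Doob $L^2$ strategy, and the two-scale argument for \eqref{eq:concentration in urn models_2} (auxiliary parameter $s$, optional stopping between $n_s$ and $n_t$, optimization over $s$) all match the paper's proof. A minor difference: the paper avoids your bootstrapping stopping time $\sigma$ entirely by bounding the conditional variance increment using $Y_n\le M-an$, so that it is at most $b^2X_n/(M-an)$ times the prefactor, and then computing its expectation exactly via $\E[M_n]=\E[M_0]$; the variance estimate is therefore unconditional and no self-referential loop needs closing.

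The genuine gap is your one-line dismissal of $\p(\rho\le n_t)$ as ``a parallel (easier) estimate.'' The event $\{\rho\le n_t\}$ means $Y_n<0$ for some $n\le n_t$, equivalently $X_n>M-an$, equivalently $K_n>(M/X_0)(1-an/M)^{1-b/a}\ge M/X_0$. When $X_0\le M/(1+\varepsilon)$ this is indeed absorbed into $\{K_n>1+\varepsilon\}$ and your concentration bound applies. But when $X_0>M/(1+\varepsilon)$, i.e.\ $Y_0<\varepsilon M/(1+\varepsilon)$ (for instance $Y_0=O(1)$, the case actually arising in the configuration model started from one active vertex), the fluctuation $\varepsilon X_0(1-an/M)^{b/a}$ permitted by $|K_n-1|\le\varepsilon$ vastly exceeds the room $(M-an)-X_0(1-an/M)^{b/a}$ available before $Y_n$ turns negative, so concentration of $K_n$ says nothing about survival of the red balls. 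The paper devotes more than half of the proof of \eqref{eq:concentration in urn models_1} to exactly this point: a three-stage argument pushing $Y_n$ from $O(1)$ past $a$, then past $M^{1/3}$ via a block/second-moment argument, and then to safety via a coupling of $Y_n$ with a random walk of positive drift $b/(1+\varepsilon)-a$ together with an exponential supermartingale and optional stopping. None of this is implied by the Doob estimate, so your proof of \eqref{eq:concentration in urn models_1} is incomplete. The same issue undercuts your parenthetical claim that a direct Markov bound yields the stronger rate $Ct^{b/a}$ in \eqref{eq:concentration in urn models_2}: on $\{\rho\le n_t\}$ one has $X_{n_t}=X_\rho>M-a\rho\ge M-an_t$, so $\E[X_{n_t}]$ carries a term of size $(M-an_t)\p(\rho\le n_t)$ that you have not controlled; the paper's supermartingale $X_n/(M-a(n\wedge\rho))$ and the weaker exponent $b/(2b-a)$ exist precisely to absorb this contribution.
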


The above estimates can be stated without introducing the variable $t$.
The estimate in \eqref{eq:concentration in urn models_1} can be written as follows: for any $n$ such that $M-an \geq C/\epsilon$ we have both $\rho > n$ and $|K_k - 1| \leq \epsilon$ for all $0 \leq k \leq n$ with probability at least
\begin{equation}\label{eq:concentration in urn models_1-1}
1 - \frac{C}{X_0(1-an/M)^{b/a}\epsilon^2}.
\end{equation}
The estimate in \eqref{eq:concentration in urn models_2} can be written as follows: for any $n$ such that $M-an \geq b$ we have 
\begin{equation}\label{eq:concentration in urn models_1-2}
\p(\tau_{X_0, M} \geq n) \leq CX_0^{a/(2b-a)}(1-an/M)^{b/(2b-a)}.
\end{equation}
In particular the estimate \eqref{eq:concentration in urn models_1-2} gives the bound for all relevant values of $n$.

The following two corollaries are straightforward from Theorem \ref{thm:active_edges}.

\begin{corollary}\label{cor:active_edges}
For a positive real number $m$  define $\sigma_{m}$ as the first time $n$ such that $X_n \leq m$. There is a constant $C=C(a,b)$ such that for every $\epsilon < 1/2$ with probability at least $1 - \frac{C}{m\varepsilon^{2}}$ hold both $\rho > \sigma_m$ and that 
\[
(1-\epsilon)X_0 \Big(1-\frac{ak}{M}\Big)^{b/a}\leq X_k \leq (1+\epsilon)X_0 \Big(1-\frac{ak}{M}\Big)^{b/a}, \ \text{ for all } k \leq \sigma_m.
\]
\end{corollary}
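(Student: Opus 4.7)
The plan is to deduce the corollary from Theorem~\ref{thm:active_edges} by calibrating the parameter $t$ so that, on the event $\mathbf{K}_{X_0,M,t,\varepsilon}$, the quantity $X_{n_t}$ is forced below $m$, which then puts $\sigma_m$ inside the range where the two-sided bound already holds. Concretely, I would take $t = c_0\,m^{a/b}$ for a small constant $c_0 = c_0(a,b) \in (0,1)$ to be fixed below. One may also assume at the outset that $m\varepsilon^{2}$ exceeds a constant depending on $a,b$ (otherwise $1 - C/(m\varepsilon^{2})\le 0$ and the claim is vacuous).

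On $\mathbf{K}_{X_0,M,t,\varepsilon}$ the inequality $|K_n - 1| \le \varepsilon$ holds for every $n \le n_t$, and $\rho > n_t$. Specialising the upper half of this bound to $n = n_t$ and using $1 - an_t/M \le tX_0^{-a/b} + a/M$ (which follows from the floor in the definition of $n_t$),
\[
X_{n_t} \le (1+\varepsilon)\,X_0\,(1 - an_t/M)^{b/a} \le (1+\varepsilon)\bigl(t + aX_0^{a/b}/M\bigr)^{b/a}.
\]
The hypothesis $t \ge CX_0^{a/b}/(M\varepsilon)$ of Theorem~\ref{thm:active_edges} gives $aX_0^{a/b}/M \le t$ once $C$ is chosen large, so the right-hand side is at most $(1+\varepsilon)(2t)^{b/a} = (1+\varepsilon)(2c_0)^{b/a}\,m$. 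Fixing $c_0 = c_0(a,b)$ small enough that $(1+\varepsilon)(2c_0)^{b/a} \le 1$ for every $\varepsilon < 1/2$, we get $X_{n_t} \le m$. Hence $\sigma_m \le n_t < \rho$, and because $|K_k - 1|\le\varepsilon$ is known for every $k \le n_t$, it holds in particular for every $k \le \sigma_m$; this is exactly the two-sided bound claimed in the corollary.

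The probability from Theorem~\ref{thm:active_edges} reads
\[
\mathbb{P}\bigl(\mathbf{K}_{X_0,M,t,\varepsilon}\bigr) \ge 1 - \frac{C}{t^{b/a}\varepsilon^{2}} = 1 - \frac{C\,c_0^{-b/a}}{m\,\varepsilon^{2}},
\]
which is of the required form once $c_0^{-b/a}$ is absorbed into $C$. The main (and essentially the only) obstacle in executing this plan is the bookkeeping needed to confirm that the regime in which the theorem's hypothesis $t \ge CX_0^{a/b}/(M\varepsilon)$ fails is contained in the regime in which the corollary's conclusion becomes vacuous; this reduces to comparing $m$ against $X_0/(M\varepsilon)^{b/a}$ and $1/\varepsilon^{2}$, and uses only $X_0 \le M$ together with $b>a$.
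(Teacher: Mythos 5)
Your proposal is correct and follows essentially the same route as the paper: the paper's proof is exactly the observation that the corollary's event is implied by $\mathbf{K}_{X_0,M,t,\varepsilon}$ for $t=2^{-a/b-1}m^{a/b}$ (your $c_0$), combined with Remark \ref{rem:Upper_bound_on_stopping-size} to get $M-an_t\le 2MtX_0^{-a/b}$ and hence $X_{n_t}\le m$. You are in fact more explicit than the paper about the degenerate regime where the hypothesis $t\ge CX_0^{a/b}/(M\varepsilon)$ fails; that check does go through (using $m\le X_0\le M$ and $b>a$) after enlarging the corollary's constant, and the paper simply calls the whole thing immediate.
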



Denote by $\mathbf{R}$ the event that the  process doesn't end before blue balls are exhausted, that is $\mathbf{R} = \{M-a\rho \leq 0\}$.

\begin{corollary}\label{cor:estimates_on_stopping}
There exist a positive constant $C=C(a,b)$ such that
\[
\p(\mathbf{R}) \geq 1 -\frac{CX_0^{a/(2b-a)}}{M^{b/(2b-a)}} \geq 1- \frac{C}{M^{(b-a)/(2b-a)}}.
\]
\end{corollary}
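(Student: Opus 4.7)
The plan is to identify $\mathbf{R}^c$ with an event of the form $\{\tau_{X_0,M} \geq n\}$ for an $n$ as close to $M/a$ as is permissible, and then directly apply the estimate \eqref{eq:concentration in urn models_1-2} already established in Theorem \ref{thm:active_edges}.

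First I would unpack what $\mathbf{R}^c$ means. If $M - a\rho > 0$ then the stopping condition triggered at time $\rho$ cannot be $X_\rho + Y_\rho \leq 0$, so it must be $Y_\rho < 0$. Combined with $X_\rho + Y_\rho = M - a\rho > 0$ this forces $X_\rho > 0$, and since the chain is frozen after $\rho$, the blue count never hits $0$, i.e.\ $\tau_{X_0,M} = \infty$. Conversely, when $\tau_{X_0,M}$ is finite the transition probabilities show that from time $\tau_{X_0,M}$ onward only red balls are drawn, so $Y_n = M - an$ evolves deterministically and the only way $\rho$ can be triggered later is via $M - an \leq 0$, putting us in $\mathbf{R}$. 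This yields the inclusion $\mathbf{R}^c \subseteq \{\tau_{X_0,M} = \infty\}$.

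Having this, $\mathbf{R}^c$ is contained in $\{\tau_{X_0,M} \geq n\}$ for every finite $n$, so I would choose $n$ to minimize the right-hand side of \eqref{eq:concentration in urn models_1-2} subject to the constraint $M - an \geq b$. Taking $n = \lfloor (M-b)/a \rfloor$ makes $M - an \in [b, b+a)$, so $(1 - an/M)^{b/(2b-a)}$ is of order $M^{-b/(2b-a)}$, and \eqref{eq:concentration in urn models_1-2} gives
\[
\p(\mathbf{R}^c) \leq \p(\tau_{X_0,M} \geq n) \leq C X_0^{a/(2b-a)} \Big(1 - \frac{an}{M}\Big)^{b/(2b-a)} \leq C' \frac{X_0^{a/(2b-a)}}{M^{b/(2b-a)}},
\]
which is the first inequality of the corollary. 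The second inequality then follows from the trivial bound $X_0 \leq X_0 + Y_0 = M$, which upgrades $X_0^{a/(2b-a)}$ to $M^{a/(2b-a)}$ in the numerator and collapses the exponent to $-(b-a)/(2b-a)$.

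There is no real obstacle here: the hard analytic work has already been done in Theorem \ref{thm:active_edges}. The only point requiring care is the deterministic description of the process after blue exhaustion, which is what legitimises the inclusion $\mathbf{R}^c \subseteq \{\tau_{X_0,M} = \infty\}$; once that is in place, the proof reduces to selecting the largest admissible $n$ in \eqref{eq:concentration in urn models_1-2} and a one-line monotonicity argument for the second inequality.
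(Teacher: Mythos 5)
Your proposal is correct and follows essentially the same route as the paper: the paper's one-line proof also reduces $\mathbf{R}^c$ to the event $\{\tau_{X_0,M}\geq n_t\}$ and applies the tail bound \eqref{eq:concentration in urn models_2} with $t$ of order $X_0^{a/b}/M$, which is exactly your choice $M-an\in[b,b+a)$ in the restated form \eqref{eq:concentration in urn models_1-2}. You merely spell out the inclusion $\mathbf{R}^c\subseteq\{\tau_{X_0,M}=\infty\}$ that the paper leaves implicit.
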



The second result gives the estimate on the number of red balls $Y_n$. As the previous theorem shows it is close to
\[
(M-an)-X_0\Big(1 - \frac{an}{M}\Big)^{b/a}.
\]
The following result shows that this estimate holds throughout the process life-time and, as in the previous theorem does not depend on the starting configuration. 

\begin{theorem}\label{thm:inactive_edges}
Consider the process
\[
L_n = \frac{Y_n}{(M-an)-X_0(1-an/M)^{b/a}},
\]
and the event
\[
\mathbf{L}_{X_0,M,\varepsilon} = \left\{|L_n-1| \leq \varepsilon, \ \text{for all } 0 \leq n < M/a\right\}.
\]
For any $\varepsilon > 0$ there is a positive sequence
  $(\lambda_{\varepsilon,M})_M$  converging to $0$ as $M \to \infty$ such that for all starting configurations $0
  < X_0 < M$
 \[
\mathbb{P}(\mathbf{L}_{X_0,M,\varepsilon})
\geq 1-\lambda_{\varepsilon,M}.
\]
\end{theorem}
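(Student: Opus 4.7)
The proof will exploit the deterministic identity $Y_n = (M-an) - X_n$, which gives
\[
L_n - 1 = \frac{x_n - X_n}{y_n},\qquad x_n := X_0(1-an/M)^{b/a},\quad y_n := (M-an) - x_n,
\]
so the task reduces to bounding $|X_n - x_n|/y_n$ uniformly in $n \in [0,M/a)$ with probability tending to $1$ as $M\to\infty$. My plan is to partition $[0,M/a)$ into bulk, late, and early regions and handle each by a different tool. Fix a small $\eta = \eta(\varepsilon)>0$ and an exponent $\beta\in(a/b,1)$. On the \emph{bulk region} $\mathcal{B}=\{n: y_n\geq \eta(M-an)\text{ and }1-an/M\geq M^{-\beta}\}$ we have $x_n/y_n\leq 1/\eta$, so I will invoke Corollary \ref{cor:active_edges} with multiplicative parameter $\varepsilon''=\eta\varepsilon/2$ and threshold $m_M=\log M$: on an event of probability $1-C/(m_M\varepsilon''^2)\to 1$ this yields $|X_n - x_n|\leq \varepsilon'' x_n$ for $n\leq \sigma_{m_M}$, hence $|L_n-1|\leq \varepsilon/2$. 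For $n\in\mathcal{B}$ past $\sigma_{m_M}$, monotonicity of $X_n$ and $x_n$ bounds both by $O(m_M)$, while $y_n\geq \eta M^{1-\beta}$, so $|L_n-1|=O(m_M/M^{1-\beta})\to 0$. On the \emph{late region} $\{1-an/M<M^{-\beta}\}$, estimate \eqref{eq:concentration in urn models_1-2} guarantees that blue balls are exhausted before this region begins with probability $1-CM^{(a-\beta b)/(2b-a)}\to 1$; there $X_n=0$ and $L_n=1/(1-r_n)$ with $r_n=(X_0/M)(1-an/M)^{(b-a)/a}\leq M^{-\beta(b-a)/a}\to 0$.

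The delicate region is the \emph{early region} $\mathcal{E}=\{n: y_n<\eta(M-an)\}$, which is nontrivial only when $X_0/M>1-\eta$ (so $Y_0<\eta M$) and is then contained in $n\leq n_1=O(\eta M)$. There $r_n>1-\eta$ forces $y'(n)=br_n-a\geq c_0:=b(1-\eta)-a>0$ for $\eta$ small, whence $y_n\geq Y_0+c_0 n$. I will control $E_n:=Y_n-y_n$ through the compensated martingale $\mathcal{M}_n:=Y_n-Y_0-\sum_{k<n}(bX_k/(M-ak)-a)$, whose increments are bounded by $b$. Azuma--Hoeffding combined with a union bound over $n<M/a$ gives $|\mathcal{M}_n|\leq C\sqrt{n\log M}$ uniformly with probability $\to 1$. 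Using $X_k - x_k = -E_k$, the process $E_n$ satisfies the approximate recursion
\[
E_n = \mathcal{M}_n - b\sum_{k<n}\frac{E_k}{M-ak} + O(n/M),
\]
and a discrete Gronwall step with weights $\prod_{j>k}(1-b/(M-aj))\approx [(M-an)/(M-ak)]^{b/a}$, all close to $1$ on $\mathcal{E}$, yields $|E_n|=O(\sqrt{n\log M})$ uniformly on $\mathcal{E}$. Consequently $|L_n-1|\leq C\sqrt{\log M/n}/c_0\leq \varepsilon$ for $n\geq n_{\min}:=(C/(c_0\varepsilon))^2\log M$.

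For the residual window $n<n_{\min}$, relevant only when $Y_0$ is at most polylogarithmic, I will exploit that the process is essentially deterministic in its first $O(\log M)$ steps: the probability that \emph{any} red ball is drawn in the first $n_{\min}$ steps is bounded by the expected red count $\sum_{k<n_{\min}}(Y_0+(b-a)k)/(M-ak)=O(\operatorname{polylog}(M)/M)\to 0$, and on the complementary event $Y_n=Y_0+(b-a)n$ and $X_n=X_0-bn$ hold deterministically, so a Taylor expansion of $y_n$ about $n=0$ gives $|Y_n-y_n|=O(nY_0/M+n^2/M)$ and $|L_n-1|=O(\operatorname{polylog}(M)/M)\to 0$. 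The main obstacle is precisely this combined early/small-$Y_0$ corner: multiplicative control of $X_n$ from Theorem \ref{thm:active_edges} is too crude since $y_n\ll x_n$, while Azuma--Hoeffding is too weak in the first polylogarithmic steps where $\sqrt{n\log M}$ can exceed $y_n$. The ``no red drawn'' argument fills the gap by leveraging that almost all early draws must be blue when $Y_0$ is small, and an appropriate coordination of $\eta,\beta,m_M$, and $n_{\min}$ ensures all failure probabilities sum to $o(1)$ uniformly in the starting configuration $X_0$.
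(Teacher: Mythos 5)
Your proposal is correct in its essentials but follows a genuinely different route from the paper in the hardest regime. The paper also splits time into regions (via the value of $1-an/M$): it uses Theorem \ref{thm:active_edges} directly in the bulk, the exhaustion bound \eqref{eq:concentration in urn models_2} near the end, and, for the early phase when $Y_0$ is small, a combination of (i) applying \eqref{eq:concentration in urn models_1} with a carefully tuned accuracy $\delta_M\asymp \varepsilon\log M/\sqrt{M}$ so that multiplicative control of $X_n$ still translates into additive control of $Y_n$ (its interval $I_4$), and (ii) a direct count of red draws showing $Y_n=Y_0+(b-a)n$ up to an additive $(\log M)^4$ (its intervals $I_{5,1},I_{5,2}$). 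You replace (i)--(ii) for $n$ beyond the first polylogarithmic window by concentration of the compensated process $\mathcal{M}_n=Y_n-Y_0-\sum_{k<n}(bX_k/(M-ak)-a)$ via Azuma--Hoeffding plus a discrete Gronwall step, which gives the clean additive bound $|Y_n-y_n|=O(\sqrt{n\log M})$ against the deterministic lower bound $y_n\geq Y_0+c_0n$; your residual ``no red drawn'' window coincides in spirit with the paper's $I_{5,2}$ argument. Your approach is arguably more systematic (one mechanism covers what the paper treats as two separate intervals $I_4$ and $I_5$, and it avoids the delicate verification \eqref{eq:conditions_on_delta_M}), at the cost of an extra union bound and a Gronwall computation; the paper's approach leans entirely on Theorem \ref{thm:active_edges} wherever possible.

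Two technical points need explicit care in a write-up. First, the identity $Y_n=(M-an)-X_n$ you start from holds only for $n<\rho$; after $\rho$ the process is frozen and $X_n+Y_n=M-a\rho$. The paper tracks this with the correction terms $\xi_n,\chi_n$ in \eqref{eq: comparison of A and B}; in your setup it suffices to intersect all events with $\mathbf{R}$ (Corollary \ref{cor:estimates_on_stopping} gives $\mathbb{P}(\mathbf{R}^c)=O(M^{-(b-a)/(2b-a)})$ uniformly in $X_0$), since on $\mathbf{R}$ one has $\rho\geq M/a$. Second, your Gronwall recursion for $E_n$ uses the drift formula $\mathbb{E}[Y_{n+1}-Y_n\mid\mathcal{F}_n]=bX_n/(M-an)-a$, valid only while $n<\rho$; to avoid circularity you should run the argument for the stopped process and then bootstrap (on the intersection of the Azuma event and the ``no early red draw'' event, $Y_n\geq y_n-C\sqrt{n\log M}>0$ throughout $\mathcal{E}$, so the stopped and unstopped processes agree there). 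Both are routine patches and do not affect the validity of the strategy.
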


Both of the above theorems can be applied to the configuration model.
See Figure \ref{figure} for the ratio of active and inactive half-edges in the configuration model used to generate the random regular graph of degree $d=20$ as predicted by the above theorems.
 In particular they imply that when inactive vertices get exhausted (and all the vertices get connected) there are still roughly $n^{1-2/d}$ active half-edges left.  While to generate random regular graphs one usually runs  the configuration model  with $X_0=d(n-1)$, $Y_0=d$, the urn model with general positive integer $X_0$ and $Y_0$ corresponds to a configuration model in which the graph is partially constructed in the beginning. This generalization is relevant for the analysis of a competing first passage percolation model in \cite{main}.
 \begin{figure}\label{figure}
\begin{center}
\includegraphics[scale=0.3]{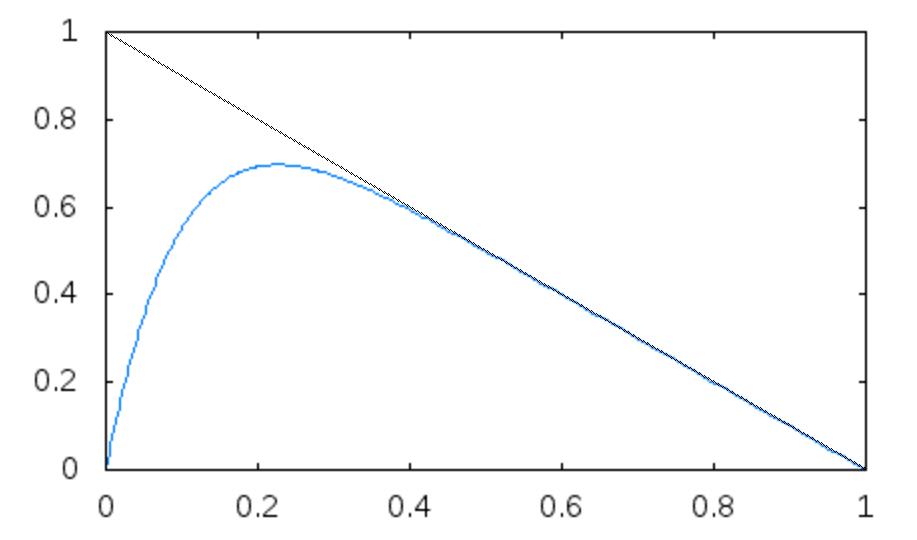}
\caption{Proportion of active half-edges ($A_k/nd$) in the configuration model for $d=20$ as predicted by theorems \ref{thm:active_edges} and \ref{thm:inactive_edges} represented by the blue line. Horizontal axis represents rescaled values of $2k/nd \in [0,1]$. Diagonal line represents the number of unused half-edges $A_k + I_k=nd-2k$.}
\end{center}
 \end{figure}
\section{Proofs}

First we give an elementary technical estimate.

\begin{lemma}\label{lemma:technical}
For any positive integer $n$ such that $M-an \geq 2b$ we have
\[
e^{-\frac{b^2}{a(M-an)}}\Big(1-\frac{an}{M}\Big)^{b/a}\leq \prod_{k=0}^{n-1}\Big(1-\frac{b}{M-ak}\Big) \leq e^{\frac{b}{M-an}}\Big(1-\frac{an}{M}\Big)^{b/a}.
\]
\end{lemma}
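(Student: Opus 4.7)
The plan is to take logarithms and reduce to proving the additive bounds
\[
\frac{b}{a}\log\Big(1-\frac{an}{M}\Big) - \frac{b^2}{a(M-an)} \;\leq\; \sum_{k=0}^{n-1}\log\Big(1-\frac{b}{M-ak}\Big) \;\leq\; \frac{b}{a}\log\Big(1-\frac{an}{M}\Big) + \frac{b}{M-an},
\]
after which exponentiation gives the claim. The two ingredients I will use are the elementary Taylor bounds $-y - y^2/(2(1-y)) \leq \log(1-y) \leq -y$ for $y \in [0,1)$, applied with $y_k = b/(M-ak)$, and the identity $\int_0^n dx/(M-ax) = -\frac{1}{a}\log(1-an/M)$, which furnishes the logarithmic main term via sum-integral comparison for the monotone function $h(x) = 1/(M-ax)$.

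For the upper bound, $\log(1-y_k) \leq -y_k$ reduces matters to a lower bound on $\sum_{k=0}^{n-1} 1/(M-ak)$. Since $h$ is increasing, $\int_k^{k+1} h(x)\,dx \leq h(k+1)$; summing and reindexing gives $\sum_{k=0}^{n-1} h(k) \geq \int_0^n h\,dx - h(n) = -\frac{1}{a}\log(1-an/M) - 1/(M-an)$, and multiplying by $-b$ closes the upper bound.

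For the lower bound, noting $y_k^2/(1-y_k) = b^2/((M-ak)(M-ak-b))$, the Taylor inequality produces
\[
\sum_{k=0}^{n-1}\log(1-y_k) \;\geq\; -b\sum_{k=0}^{n-1}\frac{1}{M-ak} - \frac{b^2}{2}\sum_{k=0}^{n-1}\frac{1}{(M-ak)(M-ak-b)}.
\]
The first sum is now controlled from above by the reverse comparison $\sum h(k) \leq \int_0^n h\,dx = -\frac{1}{a}\log(1-an/M)$. For the quadratic remainder I will bound $(M-ak)(M-ak-b) \geq (M-ak-b)^2$ and then compare to the integral $\int_0^n dx/(M-ax-b)^2 \leq 1/(a(M-an-b))$ using that $(M-ax-b)^{-2}$ is increasing. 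The hypothesis $M-an \geq 2b$ then gives $M-an-b \geq (M-an)/2$, so the quadratic piece is at most $b^2/(a(M-an))$, as required.

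The main subtlety is just to align constants: the hypothesis $M-an \geq 2b$ enters precisely to absorb the $-b$ shift in the denominator of the quadratic error with a factor-of-two loss, which is exactly compensated by the $1/2$ in the Taylor remainder. No deeper idea is needed beyond careful bookkeeping of the sum-versus-integral errors on the two sides.
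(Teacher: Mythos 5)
Your proposal is correct and follows essentially the same route as the paper: take logarithms, apply the elementary bounds $\log(1-t)\le -t$ together with a second-order lower bound on $\log(1-t)$, and control the resulting sums by comparison with $\int_0^n dx/(M-ax)$. The only differences are cosmetic --- the paper uses $\log(1-t)\ge -t-t^2$ for $t\le 1/2$ and bounds the quadratic remainder by $\sum b^2/(M-ak)^2$ directly, whereas you use the sharper remainder $y^2/(2(1-y))$ and a shifted denominator, arriving at the same constants.
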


\begin{proof}
Denote 
\[
S_n = \sum_{k=0}^{n-1}\log\Big(1- \frac{b}{M-ak}\Big).
\]
For $0 \leq t \leq 1/2$ it holds that  $-t -t^2 \leq\log(1-t) \leq -t$ and so we have  
\[
- \sum_{k=0}^{n-1}\frac{b}{M-ak} - \sum_{k=0}^{n-1} \frac{b^2}{(M-ak)^2}\leq 
S_n
\leq - \sum_{k=0}^{n-1}\frac{b}{M-ak}.
\]
Comparing the sum and the integral 
\[
\frac{b}{a}\int_{M-an}^{M}\frac{dt}{t} - \frac{b}{M-an} + \frac{b}{M}\leq
\sum_{k=0}^{n-1}\frac{b}{M-ak}
\leq
\frac{b}{a}\int_{M-an}^M\frac{dt}{t},
\]
gives
\[
\frac{b}{a} \log\Big(1-\frac{an}{M}\Big) - \frac{b^2}{a(M-an)}
\leq
S_n
\leq
\frac{b}{a} \log\Big(1-\frac{an}{M}\Big) + \frac{b}{M-an},
\]
which implies the claim.

\end{proof}

\begin{remark}\label{rem:Upper_bound_on_stopping-size}
While by definition $M-an_t \geq MtX_0^{-a/b}$ the condition $t\geq aX_0^{-a/b}/M$, which is satisfied for all $t$ appearing in \eqref{eq:concentration in urn models_1} and \eqref{eq:concentration in urn models_2} implies $M-an_t \leq 2MtX_0^{-a/b}$. This will be used in estimates below.
\end{remark}

\begin{proof}[Proof of Theorem \ref{thm:active_edges}, inequality \eqref{eq:concentration in urn models_1}] 
Recall the definitions of the $\sigma$-algebra $\mathcal{F}_n$ and the stopping time $\rho$, and consider the random variable $\xi_n = \mathbf{1}_{\{\rho>n\}}$ which is the indicator of the event that $\rho>n$. Since $X_n + Y_n = M-a(n\wedge \rho)$, for all $n$,   the assumption $M-a\rho >0$ implies that $M-a\rho < X_\rho < M-a\rho +a$. 
It is an easy computation that
\[
\e[X_{n+1}|\mathcal{F}_n] = \Big(1- \frac{b\xi_n}{M-an}\Big)X_n,
\] 
which shows that the process 
\begin{equation}\label{eq:definition_of_m}
M_n = \frac{X_n}{X_0} \prod_{k=0}^{n-1} \Big(1-\frac{b\xi_k}{M-ak}\Big)^{-1} = \frac{X_{n \wedge \rho}}{X_0} \prod_{k=0}^{(n \wedge \rho) -1} \Big(1-\frac{b}{M-ak}\Big)^{-1}.
\end{equation}
is a martingale with $M_0=1$. To estimate the variance observe that for $\rho > n$ we have
\begin{multline}
X_0^{2}\prod_{k=0}^{n} \Big(1-\frac{b\xi_k}{M-ak}\Big)^{2}\e[(M_{n+1}-M_n)^2|\mathcal{F}_n]  \\
=\Big(X_n-b - X_n\big(1-\frac{b\xi_n}{M-an}\big) \Big)^2\frac{X_n}{X_n+Y_n} + \Big(X_n - X_n\big(1- \frac{b\xi_n}{M-an} \big)\Big)^2\frac{Y_n}{X_n+Y_n}
\\ = \frac{b^2X_n}{M-an} - \frac{b^2X_n^2\xi_n}{(M-an)^2} \leq \frac{b^2X_n}{M-an},
\end{multline}
while for $\rho \leq n$ the left hand side is equal to $0$.
In any case for $M-an \geq 2b$ we have 
\[
\e[(M_{n+1}-M_n)^2] \leq \frac{b^2\e[M_n]}{(M-an-b)X_0\prod_{k=0}^{n}(1-b/(M-ak))},
\]
and by
 Lemma \ref{lemma:technical} there is a constant $C_1=C_1(a,b)$ such that
\[
\e[(M_{n+1}-M_n)^2] \leq \frac{C_1}{(M-an)X_0(1-an/M)^{b/a}}.
\]
Since by the assumptions
\[
M-an_t \geq tMX_0^{-a/b} \geq C/\epsilon \geq 2C,
\]
choosing the value $C$ in the statement larger than $b$, and
using the fact that $M_n$ is a martingale yields
\[
\e[(M_{n_t}-1)^2] \leq 
\frac{C_1 M^{b/a}}{X_0}\sum_{k=0}^{n_t-1} \frac{1}{(M-ak)^{b/a+1}}
\leq
\frac{C_1M^{b/a}}{bX_0(M-an_{t})^{b/a}} \leq
\frac{C_1}{bt^{b/a}}.
\]
Combining this with the Doob's maximal inequality we have that the event $|M_n-1| \leq \varepsilon/2 $ for all $n \leq n_t$ has probability at least
\[
1- 2\frac{\e[(M_{n_t}-1)^2]}{(\varepsilon/2)^2} \geq 1- \frac{8C_1}{bt^{b/a}}.
\]

Now we need to replace the process $M_n$ with the process $K_n$ and account for the event $\rho < n_t$. First observe that the above bound on the process extends  to the process $K_n' = K_{n\wedge \rho}$. This follows directly from the second equality in \eqref{eq:definition_of_m} and Lemma \ref{lemma:technical},  using the inequality $M-an_t \geq C/\epsilon \geq 2C$.

To further replace the process $K_n'$ with $K_n$ and account for the event $\rho < n_t$, it suffices  to show that  $\p(\rho < n_t) \leq \frac{C_2}{t^{b/a}\epsilon^2}$, for a  constant $C_2=C_2(a,b)$, and the rest of the proof of \eqref{eq:concentration in urn models_1} is devoted to this. Note that without loss of generality we can assume that $t \leq X_0^{a/b}$, otherwise  $n_t <0$. This assumption in turn implies $t^{-b/a} \geq X_0^{-1} \geq M^{-1}$, and since $\epsilon < 1/2$,
 it actually suffices to prove that $\p(\rho < n_t ) \leq \frac{4C_2}{M}$, for all $\frac{CX_0^{a/b}}{M\varepsilon} \leq t \leq X_0^{a/b}$.

Now if $\rho < n_t$, the fact that $X_\rho > M-a\rho$ and the estimate $K_{n_t}' = K_{\rho} \leq 1+\epsilon$ would imply
\[
1-\frac{a\rho}{M} \geq \Big(\frac{M}{(1+\epsilon)X_0}\Big)^{a/(b-a)}.
\]
By the proven estimate for the process $K_n'$
\[
\p\Big(\rho < n_t, 1-\frac{a\rho}{M} < \big(\frac{M}{(1+\epsilon)X_0}\big)^{a/(b-a)}\Big) \leq \p(K_{n_t}'> 1+\varepsilon) \leq \frac{C}{t^{b/a}\epsilon^2}.
\]
In particular this completely handles the case when $X_0 < M/(1+\epsilon)$, so from now on assume that $X_0 \geq M/(1+\epsilon)$ (that is $Y_0 \leq \epsilon M/(1+\epsilon)$). Note that in order to prove the bound \eqref{eq:concentration in urn models_1} we can  assume $\epsilon$ to be bounded from above by a constant smaller than $1/2$ (we simply need to adjust the constant $C$ to extend the bound to all $\epsilon < 1/2$). For the computations that follow it is convenient to assume that $\epsilon < b/a -1$, so we will assume this to hold until the end of the proof of \eqref{eq:concentration in urn models_1}. Let $\kappa = b/(1+\epsilon)-a > 0$.

Consider the stopping time $\tau_3$ as the first index $n$ such that $X_n  < M/(1+\epsilon)^{-1}$. We will prove that $\p(\rho < \tau_3) \leq C_3/M$ for some positive constant $C_3 =C_3(a,b)$. The analysis will be split into three parts: bounding the probabilities of the events $\{\rho < \tau_1\}$, $\{\tau_1 \leq \rho < \tau_2\}$ and $\{\tau_2 \leq \rho < \tau_3\}$ where $\tau_1 $ and $\tau_2$ are stopping times defined as the first times $n \geq 0$ such that $Y_n > a$ (that is $X_n < M-an-a$) and $Y_n > M^{1/3}$ (that is $X_n < M-an - M^{1/3}$).

First for $\rho \leq \tau_1$ assume that $Y_0 \leq a$ (so that $\tau_1 >0$) and observe that (for some positive $C_{3,1}=C_{3,1}(a,b)$) with probability at least $1-C_{3,1}/M$ initial $\lceil a/(b-a)\rceil$  steps result in drawing a blue ball. After this the number of red balls will be strictly larger than $a$, and thus this handles the case $n < \tau_1$.

Now we consider the case $\tau_1 \leq \rho < \tau_2$. Since we already analyzed the first case, by Markov property we can now assume that $Y_0 > a$, so that $\tau_1 =0$.
Moreover, assume that $Y_0 \leq M^{1/3}$, so that $\tau_2>0$. Select an integer $l_1 \geq 2$ such that $(l_1-1)(b-a) \geq a+1$ and 
define $\mathbf{A}$ as the event that for all integers $0 \leq k \leq M^{1/3}$ there is at most one red draw in the steps $l_1k+1, l_1k+2, \dots, l_1k+ l_1$.  By the definition of $l_1$ and since $Y_0 >a$, this event implies that $Y_{l_1\lfloor M^{1/3} \rfloor + l_1} > M^{1/3}$, and in particular $\rho \geq \tau_2$, so to handle this case we only need to show that the probability of $\mathbf{A}^c$ is at most $O(1/M)$. To end this observe that in this regime the value of $Y_n$ is always bounded from above by $2bl_1M^{1/3}$ and so the probability that for a fixed integer $0 \leq k \leq M^{1/3}$ more than two of the steps $l_1k+1, l_1k+2, \dots, l_1k+ l_1$ are red draws is at most
\[
1 - \Big(1- \frac{2bl_1M^{1/3}}{M}\Big)^{l_1} - l_1 \frac{2bl_1M^{1/3}}{M}\Big(1- \frac{2bl_1M^{1/3}}{M}\Big)^{l_1-1}  \leq l_1(l_1-1)\Big(\frac{2bl_1M^{1/3}}{M}\Big)^2 = \frac{4b^2l_1^3(l_1-1)}{M^{4/3}}.
\]
Now the expected number of steps $0 \leq k \leq M^{1/3}$ for which this happens is at most $C_{5,2}/M$, for some positive constant $C_{5,2} = C_{5,2}(a,b)$, so the desired upper bound on the probability of $\mathbf{A}^c$ follows by Markov inequality.


To handle the last case, by Markov property we can assume that $Y_0 > M^{1/3}$, so that $\tau_2=0$. Consider the process $Y_n'$ such that $Y_0'=Y_0$ and such that at each step $Y_n'$ either decreases by $a$ with probability $\epsilon/(1+\epsilon)$ or increases by $b-a$ with probability $1/(1+\epsilon)$. It is a simple observation that one can couple the processes $Y_n$ and $Y_n'$ so that $Y_n' \leq Y_n$ for all $n \leq \tau_3$. Therefore, defining $\rho'$ as the smallest index $n$ such that $Y_n' \leq Y_0/2$ it is clear that $\rho < \tau_3$ implies $\rho' < \infty$. Now the bound on the probability of $\rho < \tau_3$ follows from 
\[
\p(\rho < \tau_3) \leq \p(\rho' < \infty) \leq e^{-cY_0/2} \leq e^{-cM^{1/3}/2},
\]
for a positive constant $c = c(a,b)$. To justify the second inequality above choose the value of the constant $c>0$ so that 
\[
h(c) = \frac{1}{1+\epsilon}e^{-c(b-a)} + \frac{\epsilon}{1+\epsilon}e^{ca} = 1,
\]
which exist since $h(0)=1$, $\lim_{t \to \infty}h(t)= \infty$ and $h'(0) = -b/(1+\epsilon)+a = - \kappa < 0$. Such a choice then implies that $e^{-cY_n'}$ is a martingale and since $e^{-cY_{n'\wedge \rho'}}$ is bounded, optional stopping theorem implies that 
\[
e^{-cY_0} \geq \p(\rho' < \infty) e^{-cY_0/2},
\]
which yields the inequality.



\end{proof}

\begin{proof}[Proof of Theorem \ref{thm:active_edges}, inequality \eqref{eq:concentration in urn models_2}.]
It's clear that by taking $C \geq 1$, it suffices to show the claim  for $t<1$, so we will asume this throughout the proof. 
In the proof below we will assume that $M$ is sufficiently large, so that the presented estimates hold. 
The constant $C$ from \eqref{eq:concentration in urn models_1} will be denoted by $C_0$.

First define $s=t^{-a/(2b-a)}$. Since $t < 1$, we have $s \geq 1 \geq 2C_0X_0^{a/b}M^{-1}$ for $M$ large enough (as $X_0 \leq M$), and by
\eqref{eq:concentration in urn models_1} we have
\begin{equation}\label{eq:back_estimate}\mathbb{P}(\mathbf{K}_{X_0, M, s, 1/2}^c) \leq 4C_0s^{-b/a} = 4C_0t^{b/(2b-a)}.\end{equation}

It is easy to check that the process $X_n / (M-a(n \wedge \rho))$ is a supermartingale: For $\rho \leq n$ the value of the process remains unchanged, and when $\rho > n$ we have
 \begin{align*}
 \mathbb{E}\Big(\frac{X_{n+1}}{M-a((n+1)\wedge \rho)}\Big | \mathcal{F}_n\Big) 
&= \frac{X_n}{M-a(n+1)} - \frac{b}{M-a(n+1)}\frac{X_n}{M-an}\\
& = \frac{X_{n}}{M-a(n\wedge \rho)}\Big(1- \frac{b-a}{M-a(n+1)}\Big).
\end{align*}
Since $n_s \leq n_t$, on the event $\mathbf{K}_{X_0, M, s, 1/2}$ we have
\begin{equation}\label{eq:conditional_estimate}
\e\Big(\frac{X_{n_t}}{M-a(n_t\wedge \rho)}\Big|\mathcal{F}_{n_s}\Big) \leq \frac{X_{n_s}}{M-a(n_s\wedge \rho)} \leq \frac{3X_0}{2M}\Big(1 - \frac{an_s}{M}\Big)^{b/a-1}.
\end{equation}
Observe that if $\tau_{X_0,M} \geq n_t$ then either $\rho \leq n_t$ which in turn implies $X_{n_t} > M-a(n_t\wedge \rho)$ or $\rho > n_t$ and $X_{n_t}\geq b$. In either case we have 
\[
\frac{X_{n_t}}{M-a(n_t\wedge \rho)} \geq  \min\left\{1, \frac{b}{M-an_t}\right\} \geq \frac{b}{M-an_t},
\]
since the assumption on $t$ in \eqref{eq:concentration in urn models_2} implies that $M-an_t \geq b$. Therefore, by Markov inequality
\[
\p(\tau_{X_0,M} \geq n_t) \leq \frac{M-an_t}{b}\e\Big(\frac{X_{n_t}}{M-a(n_t\wedge \rho)}\mathbf{1}_{\mathbf{K}_{X_0,M,s,1/2}}\Big) + \p(\mathbf{K}_{X_0,M,s,1/2}^c),
\]
where $\mathbf{1}_{\mathbf{K}_{X_0,M,s,1/2}}$ is the indicator of the event $\mathbf{K}_{X_0,M,s,1/2}$. Then by \eqref{eq:back_estimate}, \eqref{eq:conditional_estimate} and Remark \ref{rem:Upper_bound_on_stopping-size} we have
\begin{multline*}
\p(\tau_{X_0,M} \geq n_t) \leq \frac{3X_0}{2b}\Big(1-\frac{an_s}{M}\Big)^{b/a-1}\Big(1-\frac{an_t}{M}\Big) + 4C_0t^{b/(2b-a)} \\ \leq \frac{3\cdot 2^{b/a-1}}{b}ts^{b/a-1} +4C_0t^{b/(2b-a)} = \Big(\frac{3\cdot 2^{b/a-1}}{b} + 4C_0\Big)t^{b/(2b-a)},
\end{multline*}
which finishes the proof.

\end{proof}

\begin{proof}[Proof of Corollary \ref{cor:active_edges}] 
This is immediate from Remark \ref{rem:Upper_bound_on_stopping-size} and the fact that the event in the statement is implied by $\mathbf{K}_{X_0,M,t, \varepsilon}$ for $t= 2^{-a/b-1}m^{a/b}$.
\end{proof}

\begin{proof}[Proof of Corollary \ref{cor:estimates_on_stopping}] 
This is immediate from the fact that the event in the statement is implied by $\tau_{X_0,M} \geq n_t$ for $t=X_0^{a/b}/M$.
\end{proof}

\begin{proof}[Proof of Theorem \ref{thm:inactive_edges}]
We will assume that $\epsilon < 1/6$ which is clearly sufficient.
We will use different argument in cases when the $1-an/M$ falls in the intervals
\begin{multline*}
I_1 = \left(0, \frac{1}{X_0^{a/b}\log M}\right], \ \ 
I_2=\left[\frac{1}{X_0^{a/b}\log M}, \frac{\log M}{X_0^{a/b}}\right], \ \ 
I_3=\left[\frac{\log M}{X_0^{a/b}}, \Big(\frac{M}{2X_0}\Big)^{\frac{a}{b-a}}\right],\\
I_4=\left[\Big(\frac{M}{2X_0}\Big)^{\frac{a}{b-a}}, 1- \frac{\log M}{\sqrt{M}}\right], \ \ 
I_5=\left[1-\frac{\log M}{\sqrt{M}},1\right].
\end{multline*}
First we will handle the cases $I_1$, $I_2$ and $I_3$. Then we will handle cases $I_4$, and $I_5$  and for this we will assume that $X_0 \geq M/2$ (that is $Y_0 \leq M/2$), otherwise  $[0,1] \subset I_1 \cup I_2 \cup I_3$ so there is no need to consider the intervals $I_4$ and $I_5$. Note, that even though $X_0 \geq M/2$, the interval $I_4$ might still be empty.
In the case when $Y_0 \leq \sqrt{M}(\log M)^2$ we will further consider two subcases of $I_5$, when $1-an/M$ is in
\[
I_{5,1}=\left[1-\frac{\log M}{\sqrt{M}}, 1-\frac{1}{\sqrt{M}(\log M)^3} \right]\ \text{ and } \ I_{5,2}=\left[ 1-\frac{1}{\sqrt{M}(\log M)^3} , 1\right].
\]


 We start by writing
\[
L_n = \frac{M-a(n\wedge \rho)- X_n}{(M-an) - X_0 \Big(1-\frac{an}{M}\Big)^{b/a}},
\]
from where we easily get
\begin{equation}\label{eq: comparison of A and B}
L_n -1 = \frac{ X_0 \Big(1-\frac{an}{M}\Big)^{b/a}-X_n}{(M-an) - X_0
  \Big(1-\frac{an}{M}\Big)^{b/a}} + \xi_n, \ \text{ and }
\ \frac{|K_n-1|}{|L_n-1|} =
\frac{M}{X_0\Big(1-\frac{an}{M}\Big)^{b/a-1}} -1 + \chi_n,
\end{equation}
where $\xi_n$ and $\chi_n$ are equal to zero if $\rho \geq n$.
For all $n$ such that $1-an/M \in I_3$, the right hand side of the second formula above without the $\chi_n$ term is bounded from below by 1. The event  $\mathbf{K}_{X_0,M,\log M, \varepsilon}$ from Theorem \ref{thm:active_edges} implies that $\chi_n=0$ and $|K_n-1| \leq \epsilon$ for all $0 \leq n \leq n_{\log M}$, and in particular for all $n$ such that $1-an/M \in I_3$. 
Therefore, it also implies that $|L_n-1| \leq \varepsilon$ when $1-an/M \in I_3$. 
Moreover, for a given $\varepsilon > 0$ we can choose $M$ large
enough so that $\log M \geq \frac{C}{M^{1-a/b}\varepsilon} \geq
\frac{CX_0^{a/b}}{M\varepsilon}$, where $C$ is the constant from Theorem \ref{thm:active_edges}. Then by \eqref{eq:concentration in urn
  models_1} we have that the probability of the event  $\mathbf{K}_{X_0,M,\log M, \varepsilon}$  is bounded from below by 
\[
1- \frac{C}{(\log
  M)^{b/a}\varepsilon^2},
\]
which resolves the case when $1-an/M \in I_3$. 

Now we look at the case when $1-an/M \in I_2$. 
By Remark \ref{rem:Upper_bound_on_stopping-size}, $1-an_{\log M}/M \leq 2X_0^{-a/b}\log M$ holds for $M$ large enough, so
 the event $\mathbf{K}_{X_0,M,\log M, \varepsilon}$ implies that
\[
X_{n_{\log M}}  \leq (3\log M)^{b/a}.
\]
Thus, on the event $\mathbf{K}_{X_0,M,\log M, \varepsilon} \cap \mathbf{R}$, 
we have both $0 \leq X_n \leq (3\log M)^{b/a}$ for all $n$ such that $1-an/M \in I_2$ and $\xi_n=0$. From the
first relation in \eqref{eq: comparison of A and B} we get
\[
- \frac{(3\log M)^{b/a}}{\frac{M}{X_0^{a/b}\log M} -(\log
  M)^{b/a} } \leq L_n - 1 \leq \frac{(\log
  M)^{b/a}}{\frac{M}{X_0^{a/b}\log M} -(\log M)^{b/a}}.
\]
The denominator above
is bounded from below by $M^{1-a/b}/\log M - (\log M)^{b/a}$, and so for $M$ large enough both the lower and the upper bound on $L_n-1$ above are smaller than $\epsilon$ in the absolute value. 
Since $\lim_{M \to \infty}\mathbb{P}(\mathbf{K}_{X_0,M,\log M, \varepsilon} \cap \mathbf{R}) = 1$, we
conclude that for any $\varepsilon > 0$ with probability converging to 1
we have that $|L_n-1| \leq \varepsilon$, for all $n$ satisfying $1-an/M \in I_2$.

Next we consider $n$ such that $1-an/M \in I_1$, so in particular $n \geq n_{1/\log M}$. By \eqref{eq:concentration in urn models_2}, with probability of at
least $1- \frac{C}{(\log M)^{b/(2b-a)}}$ we have that $X_n=0$ and $Y_n
= M-an$ for all these $n$. This in particular implies the event $\mathbf{R}$ and therefore, with probability converging to $1$, for all such $n$,
we have that
\[
1 \leq L_n = \frac{1}{1- \frac{X_0}{M}
  \Big(1-\frac{an}{M}\Big)^{b/a-1}} \leq \frac{1}{1-
  \frac{X_0^{a/b}}{M(\log M)^{b/a-1}}} \leq \frac{1}{1-
\frac{1}{M^{1-a/b}(\log M)^{b/a-1}}},
\]
which converges to $1$ as $M \to \infty$.
 
We are done with the cases $I_1$, $I_2$ and $I_3$.
As we said we now assume $X_0 \geq M/2$. Consider the case when $1-an/M \in I_4$.
By the second relation in \eqref{eq: comparison of A and B} the
condition that both
\begin{equation}\label{eq: condition on A 3}
|K_n-1| \leq \varepsilon\Big(MX_0^{-1}\Big(1-\frac{\log
  M}{\sqrt{M}}\Big)^{-b/a+1}-1\Big)
\end{equation}
and $\rho > n$
hold when $1-an/M \in I_4$ 
implies that $|L_n-1| \leq \varepsilon$ holds for these $n$ as well. Denote the right hand side of \eqref{eq: condition on A 3} by $\delta_M$. In particular the event $\mathbf{K}_{X_0,M,t,\delta_M}$, for $t= (MX_0^{-a/b}/2)^{a/(b-a)}$ will imply that $|L_n-1| \leq \varepsilon$ for $1-an/M \in I_4$.
To calculate the probability of $\mathbf{K}_{X_0,M,t,\delta_M}$ we can apply \eqref{eq:concentration in urn
  models_1}. To justify this application we need to check that, for $M$ large enough and $\varepsilon < 1/6$ fixed, $\delta_M \leq 1/2$ and $t \geq \frac{CX_0^{a/b}}{M\delta_M}$. 
Both will follow if we prove that
\begin{equation}\label{eq:conditions_on_delta_M}
\frac{2^{a/(b-a)}C}{M} \leq \delta_M \leq 1/2.
\end{equation}
Since we assumed that $X_0 \geq M/2$ it is easy to show that
\begin{equation}\label{eq:bounds_on_delta_M}
\epsilon \frac{b-a}{a} \frac{\log M}{\sqrt{M}} \leq \delta_M \leq 2\epsilon \Big(1-\frac{\log
  M}{\sqrt{M}}\Big)^{-b/a+1}.
\end{equation}
These inequalities now immediately imply the ones in \eqref{eq:conditions_on_delta_M} for a fixed $\epsilon < 1/6$ and $M \geq M(\epsilon)$ large enough.
Therefore we can apply the estimate in \eqref{eq:concentration in urn models_1}, and by the lower bound in \eqref{eq:bounds_on_delta_M}, the probability that $|L_n-1| \leq \varepsilon$ for all $n$ such that $1-an/M \in I_4$ is at least
\[
 1- \frac{C}{t^{b/a}\delta_M^2} 
 \geq 1- \frac{2^{b/(b-a)}a^2CX_0^{a/(b-a)}M}{\varepsilon^2 (b-a)^2M^{b/(b-a)}(\log M)^2}
 \geq 1- \frac{2^{b/(b-a)}a^2C}{\varepsilon^2 (b-a)^2(\log M)^2}.
\]
Since the right hand side 
converges to $1$ when $\epsilon$ is fixed and $M \to \infty$, we are only left to consider the  final case when $1-an/M \in I_5$. To this end write
\begin{equation}\label{eq:new formula for L}
L_n = \frac{Y_n}{\Big(1-\frac{an}{M}\Big)\Big(Y_0 \Big(1-
  \frac{an}{M}\Big)^{b/a-1} + M\Big(1- \Big(1-
  \frac{an}{M}\Big)^{b/a-1}\Big)\Big)}.
\end{equation}
First consider the case when $Y_0 \geq \sqrt{M}(\log M)^2$ so that
$Y_0(1-\varepsilon/3) \leq Y_n \leq Y_0(1+\varepsilon/3)$, for $M$ large
enough and when $1-an/M \in I_5$. Furthermore, for such $n$ and $M$ large enough the denominator satisfies
\begin{multline*}
\Big(1 - \frac{\log M}{\sqrt{M}}\Big)^{b/a}Y_0 \leq
\Big(1-\frac{an}{M}\Big)\Big(Y_0 \Big(1- \frac{an}{M}\Big)^{b/a-1} +
M\Big(1- \Big(1- \frac{an}{M}\Big)^{b/a-1}\Big)\Big) \\ \leq Y_0 +
2(b-a)n \leq Y_0 + 2\frac{b-a}{a}\sqrt{M}\log M,
\end{multline*}
and so it is bounded by $Y_0(1-\varepsilon/3)$ from below and by
$Y_0(1+\varepsilon/3)$ from above, for $M$ large enough. This now implies
the deterministic fact that $|L_n-1| \leq \varepsilon$ when $1-an/M \in I_5$.

Now assume $Y_0 \leq \sqrt{M}(\log M)^2$. Clearly, for $M$ large enough, and all $n$ which satisfy $1-an/M \in I_5$ we have $Y_n \leq 2\sqrt{M} (\log M)^2$, and in particular the probability of drawing a red ball in this stage is at most $3(\log M)^2/\sqrt{M}$, for $M$ large enough. Therefore, the expected number of red balls drawn in the stage when $1-an/M \in I_{5,1}$ is no more than $3(\log M)^3/a$, and in the the stage when $1-an/M \in I_{5,2}$ is no more than $3 (a\log M)^{-1}$. By Markov inequality, with probability converging to 1 (as $M\to \infty$) we have both
\begin{equation}\label{eq:stage_5_1_numerator}
Y_n = Y_0 + (b-a)n, \ \text{ whenever } 1-an/M \in I_{5,2}
\end{equation}
and 
\begin{equation}\label{eq:stage_5_2_numerator}
Y_0 + (b-a)n- (\log M)^4 \leq Y_n \leq Y_0 + (b-a)n, \ \text{ whenever } 1-an/M \in I_{5,1}.
\end{equation}
(Note that if \eqref{eq:stage_5_1_numerator} holds then \eqref{eq:stage_5_2_numerator} implies that the value of $Y_n$ is positive throughout the $I_5$ phase.) Moreover, for a fixed $\epsilon > 0$, $M$ large enough, and any $n$ such that $1-an/M \in I_5$ 
\begin{equation}\label{eq: condition on n 4 - help}
\frac{1}{1+\varepsilon/2} \leq \Big(1- \frac{an}{M}\Big)^{b/a}\leq
1,\ \text{and} \ \frac{(b-a)n}{1+\varepsilon/2}\leq
M\Big(1- \Big(1-\frac{an}{M}\Big)^{b/a-1}\Big) \leq
\frac{(b-a)n}{1-\varepsilon/2}.
\end{equation}
In the view of \eqref{eq:new formula for L}, the case $I_{5,2}$ follows from \eqref{eq:stage_5_1_numerator} and \eqref{eq: condition on n 4 - help}. For the case $I_{5,1}$ we just need to argue about the lower bound and for this it suffices to show that for any $1-an/M \in I_{5,1}$
\[
\frac{(\log M)^4}{Y_0 + (b-a)n} < \epsilon/2,
\]
for $M$ large enough which surely holds since for any $n \in I_{5,1}$ we have $n \geq \frac{\sqrt{M}}{a(\log M)^3}$.

\end{proof}





\section*{Acknowledgments}
This work was a part of a project, most of which was done in collaboration with Yael Dekel, Elchanan Mossel and Yuval Peres. The author would like to thank them for fruitful discussions, encouragement and support.

\end{document}